\newcommand{\email}[1]{\texttt{\href{mailto:#1}{#1}}}
\newcommand{\arXiv}[1]{\href{https://arxiv.org/abs/#1}{arXiv:#1}}
\newcommand{\arxiv}[1]{\arXiv} 
\newtheorem{theorem}{Theorem}[section]
\newtheorem{remark}[theorem]{Remark}
\newtheorem{corollary}[theorem]{Corollary}
\newtheorem{definition}[theorem]{Definition}
\newcommand{\C}{\ensuremath{\mathbb{C}}}
\newcommand{\F}{\ensuremath{\mathbb{F}}}
\newcommand{\R}{\ensuremath{\mathbb{R}}}
\newcommand{\Z}{\ensuremath{\mathbb{Z}}}
\newcommand{\lat}{\mathcal{L}}
\newcommand{\eps}{\varepsilon}
\DeclareMathOperator{\spn}{span} 
\newcommand{\lspan}{\spn}
\renewcommand{\epsilon}{\eps} 
\renewcommand{\vec}[1]{\bm{#1}}
\renewcommand{\bar}{\overline}
\DeclarePairedDelimiter\set{\{}{\}}
\DeclarePairedDelimiter\norm{\lVert}{\rVert}
\newcommand{\card}[1]{\left| {#1} \right|}
\newcommand{\bit}{\set{0, 1}}
\renewcommand{\C}{\mathcal{C}}
\newcommand{\Vladut}{Vl\u{a}du\c{t}\xspace}
\newcommand{\supp}{\mathrm{supp}}
\renewcommand{\bar}{\overline}
\title{Difficulties Constructing Lattices with Exponential Kissing Number from Codes}
\author{Huck Bennett\thanks{University of Colorado Boulder, \email{huckbennett@gmail.com}. This work was supported by NSF Award No. CCF-2432132.} \and Alexander Golovnev\thanks{Georgetown University, \email{alexgolovnev@gmail.com}. This work was supported by the NSF CAREER award (grant CCF-2338730).} \and Noah Stephens-Davidowitz\thanks{Cornell University, \email{noahsd@gmail.com}. This work was supported by NSF Grants Nos.~CCF-2122230 and CCF-2312296, a Packard Foundation Fellowship, and a generous gift from Google.}}
\date{\today}
\begin{document}

\maketitle

\begin{abstract}
In this note, we present examples showing that several natural ways of constructing lattices from error-correcting codes do not in general yield a correspondence between minimum-weight non-zero codewords and shortest non-zero lattice vectors. From these examples, we conclude that the main results in two works of Vl\u{a}du\c{t} (Moscow J. Comb. Number Th., 2019 and Discrete Comput. Geom., 2021) on constructing lattices with exponential kissing number from error-correcting codes are invalid. A more recent preprint (arXiv, 2024) that Vl\u{a}du\c{t} posted after an initial version of this work was made public is also invalid. 

Exhibiting a family of lattices with exponential kissing number therefore remains an open problem (as of July 2025).
\end{abstract}

\section{Introduction}
A (linear, binary) \emph{code} $\C = \C(G) \subseteq \F_2^n$ of dimension $k$ is defined as $\C(G) := \set{\sum_{i=1}^k a_i \vec{g}_i : a_1, \ldots, a_k \in \F_2^n}$, where the generator matrix $G = (\vec{g}_1, \ldots, \vec{g}_k) \in \F_2^{n \times k}$ has linearly independent columns.\footnote{In this note, we use column bases both for codes and lattices.} A~\emph{lattice} $\lat = \lat(B) \subset \R^n$ of rank $k$ is defined as $\lat(B) := \set{\sum_{i=1}^k a_i \vec{b}_i : a_1, \ldots, a_k \in \Z}$, where the basis $B = (\vec{b}_1, \ldots, \vec{b}_k) \in \R^{n \times k}$ has linearly independent columns. We extend this notation and define $\C(G_0,\ldots, G_a)$ to be the $\F_2$-linear span of the columns of the matrices $G_0,\ldots, G_a$ and $\lat(B_0, \ldots, B_a)$ to be the integer linear span of the columns of the (integer) matrices $B_0, \ldots, B_a$, and we do not insist on linear independence.

The \emph{minimum distance} of a code $\C \subseteq \F_2^n$ is 
$d(\C) := \min_{\vec{c} \in \C \setminus \set{\vec{0}}} \norm{\vec{c}}_0$,
where $\norm{\vec{x}}_0$ denotes the Hamming weight of $\vec{x} \in \F_2^n$, and similarly the (Euclidean) minimum distance of a lattice $\lat \subset \R^n$ is 
$\lambda_1(\lat) := \min_{\vec{y} \in \lat \setminus \set{\vec{0}}} \norm{\vec{y}}_2$,
where $\norm{\vec{x}}_2$ denotes the Euclidean norm of $\vec{x} \in \R^n$.
The \emph{kissing number} $\kappa_0(\C)$ of a code $\C$ is the number of minimum-weight non-zero codewords, i.e., 
\[
\kappa_0(\C) := \card{\set{\vec{c} \in \C : \norm{\vec{c}}_0 = d(\C)}}
\; .
\]
Similarly, the (Euclidean) \emph{kissing number} $\kappa_2(\lat)$ of a lattice $\lat$ is the number of shortest non-zero vectors, i.e., 
\[
    \kappa_2(\lat) := |\set{\vec{y} \in \lat : \norm{\vec{y}}_2 = \lambda_1(\lat)}|
    \; .
\]
(One can also define the minimum distance and the kissing number for arbitrary norms.)

For a vector $\vec{x} \in \F_2^n$, we write $\bar{\vec{x}} \in \{0,1\}^n \subset \Z^n$ to denote the natural entry-wise \emph{embedding} of $\vec{x}$ into $\Z^n$ (i.e., $0 \in \F_2$ is mapped to $0 \in \Z$ and $1 \in \F_2$ is mapped to $1 \in \Z$). We extend this notation so that it applies to matrices $A \in \F_2^{n \times m}$ and subsets $S \subseteq \F_2^n$ in the natural way. 

\paragraph{Codes and lattices with exponential kissing number.} Kalai and Linial~\cite{kalaiDistanceDistributionCodes1995} asked in 1995 whether there is an infinite family of linear codes $\set{\C_n}$ with $\C_n \subseteq \F_2^n$ and $\kappa_0(\C_n) \geq 2^{\Omega(n)}$, and they conjectured that no such family of codes exists (i.e., that the kissing number must be subexponential in~$n$). This conjecture was disproven in 2001, when Ashikhmin, Barg, and \Vladut succeeded in constructing such a family based on algebraic-geometry codes~\cite{Ashikhmin-Barg-Vladut-Exp-Kiss-Codes-01}. 

Similarly, it had been a noted open problem to construct an infinite family $\set{\lat_n}$ of lattices with $\lat_n \subset \R^n$ and $\kappa_2(\lat_n) \geq 2^{\Omega(n)}$ (see, e.g.,~\cite{alonPackingsLargeMinimum1997}).  Recently, \cite{Vladut-l2kissnum-2019} gave a construction of a family of lattices from certain codes, and claimed that the resulting family of lattices has exponential kissing number if the underlying codes do. \cite{Vladut-lpkissnum-2021} then showed a different construction and claimed that it could be used to generate lattices that have exponential kissing number in $\ell_p$ norms (and certain other norms).

Unfortunately, the proofs in~\cite{Vladut-l2kissnum-2019,Vladut-lpkissnum-2021} that the resulting lattices have exponential kissing number do not work. In both cases, the proofs fail due to the fact that certain natural constructions of lattices from codes do not yield a correspondence between minimum-weight non-zero codewords and shortest non-zero lattice vectors. This arises from the fundamental but easy-to-forget fact that for $\vec{x}_1, \vec{x}_2 \in \F_2^n$, \emph{the sum $\bar{\vec{x}}_1 + \bar{\vec{x}}_2$ over $\Z^n$ of the embeddings is not necessarily the same as the embedding $\bar{\vec{x}_1 + \vec{x}_2}$ of the sum $\vec{x}_1 + \vec{x}_2$ over $\F_2$}. 

After a preliminary version of this work was published, \Vladut posted a new paper to arXiv \cite{vladut24} attempting to correct the errors in~\cite{Vladut-l2kissnum-2019,Vladut-lpkissnum-2021}.
However, similar issues remain and the new proof is also invalid. 

We detail these issues in the sequel. We also note that both~\cite{Vladut-l2kissnum-2019,Vladut-lpkissnum-2021} were retracted on arXiv after we communicated these issues to \Vladut~\cite{vladut-com-24}. As of July 2025,~\cite{vladut24} has not yet been retracted.
Consequently, the problem of constructing a family of lattices with exponential kissing number remains a tantalizing open problem. %

\paragraph{Applications of lattices with large kissing number.} The existence of an infinite family of lattices~$\{\lat_n\}$ with exponential kissing number  would imply certain complexity-theoretic hardness results for computational lattice problems~\cite{conf/stoc/AggarwalS18,conf/innovations/BennettPT22}. (In fact, even families $\set{\lat_n}$ satisfying $\kappa_2(\lat_n) \geq 2^{n^{\eps}}$ for any constant $\eps > 0$ would be useful for such purposes. As far as we know, the best known construction yields $\kappa_2(\lat_n) = n^{\Theta(\log n)}$~\cite{barnesExtremeFormsDefined1959}.) Indeed, some of the hardness results for computational lattice problems in \cite{conf/stoc/AggarwalS18,conf/innovations/BennettPT22} are only known under the still unproven assumption that such a family of lattices exists (though they were claimed unconditionally in \cite{conf/innovations/BennettPT22}, citing \cite{Vladut-l2kissnum-2019}).

Lattices with large kissing number might also yield good sphere packings, which have many applications throughout mathematics and computer science. (See, e.g., \cite{conway1999sphere}.)

\paragraph{Acknowledgments.}
The authors are grateful to Serge \Vladut~\cite{vladut-com-24} for very helpful discussions related to~\cite{Vladut-l2kissnum-2019,Vladut-lpkissnum-2021,vladut24} and the issues we bring up in this note. Some of this work was completed while the authors were visiting Divesh Aggarwal, the Centre for Quantum Technologies, and the National University of Singapore. We would like to thank them for hosting us. Finally, we thank the anonymous reviewers and Fred the bird.

\section{Lattices Constructed from Codes}
The strategy used in \cite{Vladut-l2kissnum-2019,Vladut-lpkissnum-2021,vladut24} is to construct lattices from (a tower of) codes with exponential kissing number, and the issues with the claims in \cite{Vladut-l2kissnum-2019,Vladut-lpkissnum-2021,vladut24} arise from certain subtleties in these constructions that we discuss below. 
The rough idea behind all of these constructions is to produce a lattice $\lat \subseteq \Z^n$ from a code $\C \subseteq \F_2^n$ such that $\lambda_1(\lat)^2 \geq d(\C)$ (and such that the determinant of the lattice is small, provided that the code has large dimension).
We refer the reader to~\cite{conway1999sphere} for much more discussion of these and other constructions of lattices from codes.  The construction we discuss in \cref{sec:all-min-codewords} does not appear as a named construction in~\cite{conway1999sphere} and does not seem useful for sphere packing.

For our purposes, it suffices to note that such constructions can be made to work for constructing good sphere packings. However, we present examples showing that these constructions do not in general yield a direct correspondence between minimum-weight non-zero codewords in the (tower of) codes and shortest non-zero vectors in the resulting lattice. In particular, these constructions do not obviously yield lattices with large kissing number, even if we start with codes with large kissing number.

\subsection{Construction A} 
\label{sec:cons-A}

Before we present the more sophisticated constructions that are the focus of this note, we first consider what is perhaps the simplest way to construct a lattice $\lat$ from a (single) code $\C \subseteq \F_2^n$. This is Construction A, which defines a lattice $\lat_A$ from the code $\C$ as
\[
\lat_A = \lat_A(\C) := \bar{\C} + 2\Z^n = \set{\vec{z} \in \Z^n : (\vec{z} \bmod 2) \in \C} \ \text{.}
\]
Here $\bar{\C} := \{ \bar{\vec{c}} \ : \ \vec{c} \in \C\}$, where $\bar{\vec{c}} \in \{0,1\}^n$ is the natural coordinate-wise embedding of an element $\vec{c} \in \F_2^n$ into $\{0,1\}^n$.
Notice that for every codeword $\vec{c} \in \C$, the embedded vector $\bar{\vec{c}} \in \{0,1\}^n \subset \Z^n$ is in $\lat_A$, making this construction quite natural.
However, because $\lat_A$ contains the $2n$ vectors $\pm 2 \vec{e}_i$ (where $\vec{e}_i$ is the $i$th standard normal basis vector), $\lambda_1(\lat_A)^2 = \min(d(\C),\, 4) < d(\C)$ whenever $d(\C) \geq 5$. 
In particular, whenever $d(\C) \geq 5$, the shortest non-zero vectors in $\lat_A$ are simply the shortest non-zero vectors in $2\Z^n$. Construction A is therefore not particularly useful for constructing lattices with large minimum distance or large kissing number.

\subsection{Construction D}

We next describe Construction D, which generalizes Construction A and is motivated in part by the failure of Construction A to produce lattices that maintain the minimum distance of the underlying code 
(see \cite[Ch. 8, Sec. 8]{conway1999sphere} and \cite[Sec. 4]{journals/toc/Micciancio12}, which we mostly follow here). Construction D is defined as follows.\footnote{The definition of Construction D that we use here is from~\cite{journals/toc/Micciancio12}. It produces scalings of the Construction-D lattices defined in~\cite{conway1999sphere}.}

\begin{definition}[Construction D] \label{def:consD}
Let $\F_2^n = \C_0 \supseteq \C_1 \supseteq \cdots \supseteq \C_a$ be a tower of codes where $\C_i$ has dimension $k_i$ and minimum distance $d_i \geq 4^i$ for $i = 0, \ldots, a$. 
Let $K_0, \ldots, K_a$ be matrices such that $K_i \in \F_2^{n \times (k_i - k_{i+1})}$ (where we define $k_{a + 1} := 0$) and such that $(K_i, \ldots, K_a)$ is a generator matrix for $\C_i$ for $i = 0, \ldots, a$.
The Construction-$D$ lattice obtained from $K_0, \ldots, K_a$ is then defined as
\begin{equation*} %
\lat_D = \lat_D(K_0, \ldots, K_a)
:= \lat(2^a \cdot \bar{K}_0, 2^{a-1} \cdot \bar{K}_1, \ldots, 2 \cdot \bar{K}_{a-1}, \bar{K}_a) \ \text{.}
\end{equation*}
\end{definition}

We make several remarks about this construction.
\begin{enumerate}
\item 
Most sources (e.g.,~\cite{conway1999sphere}) require that $G_0 := (K_0, \ldots, K_{a}) \in \F_2^{n \times n}$ be (lower or upper) triangular, in which case $\bar{G}_0$ is a basis of $\Z^n$ and 
\[
\lat_D = 2^a \Z^n + \lat(2^{a-1} \cdot \bar{K}_1, \ldots, 2 \cdot \bar{K}_{a-1}, \bar{K}_a) \ \text{.}
\]
Furthermore, when this holds, the $a = 1$ special case of Construction D is essentially Construction A (up to the lower bound requirement on $d_1$ in Construction D).
However,~\cite{Vladut-l2kissnum-2019} does not assume that $G_0$ is triangular, and we will not need this assumption for our counterexample.
\item \label{item:consD-not-unique} The lattice $\lat_D$ depends on the matrices $K_i$ and not only the codes $\C_i$ generated by $(K_i, \ldots, K_a)$. In particular, the embedding $\bar{\vec{c}} \in \bit^n$ of a codeword $\vec{c} \in \C_a$ is not obviously contained in $\lat$ unless $\vec{c}$ is a column of $K_a$. This is the key issue with the use of Construction~D in~\cite{Vladut-l2kissnum-2019}. See~\cite[Remark 4.3]{Mook-Peikert-Decoding-22} for an example of this (though over $\F_3$ rather than $\F_2$). We also give a counterexample to the use of Construction D in \cite{Vladut-l2kissnum-2019,Vladut-lpkissnum-2021} related to this issue below.
\item It holds that $\lambda_1(\lat_D) = 2^a$ (see, e.g.,~\cite[Theorem 5.1 and Footnote 6]{Mook-Peikert-Decoding-22}). So, \emph{if} $\vec{c} \in \C_a$ is a codeword in $\C_a$ of Hamming weight $4^a$ and its embedding $\bar{\vec{c}}$ is in $\lat_D$, then $\norm{\bar{\vec{c}}}_2 = \lambda_1(\lat_D) = 2^a$, i.e., $\bar{\vec{c}}$ is a shortest non-zero vector in $\lat_D$. However, note that by \cref{item:consD-not-unique}, even if $\vec{c} \in \C_a$ it is not necessarily the case that $\bar{\vec{c}} \in \lat_D$. 
\end{enumerate}

\paragraph{The special case of Construction D from~\cite{Vladut-l2kissnum-2019}.}
\cite[Section 6]{Vladut-l2kissnum-2019} proposes applying Construction D to a tower of codes $\F_2^n = \C_0 \supset \C_1 \supset \cdots \supset \C_a$ where $d(\C_a) = 4^a$ and the dimension~$k_i$ of the code $\C_i$ satisfies $k_i = k_{i+1} + 1$ for $i = 1, \ldots, a - 1$. Specifically, each code $\C_i$ for such $i$ is generated by a generator matrix for $\C_{i+1}$ together with an (arbitrary) additional vector $\vec{c}_i \in \F_2^n$ of Hamming weight $4^i$. So,
\begin{equation} \label{eq:vladut-consD}
\lat_D = \lat_D(K_0, \vec{c}_1,\ldots, \vec{c}_{a-1}, K_a) = \lat(2^a \bar{K}_0, 2^{a-1} \bar{\vec{c}}_{1}, \ldots, 2 \bar{\vec{c}}_{a-1}, \bar{K}_a) \ \text{,}
\end{equation}
where $K_a$ is a generator matrix for $\C_a$ and $K_0$ is such that $G_0 := (K_0, \vec{c}_1, \ldots, \vec{c}_{a-1}, K_a) \in \F_2^{n \times n}$ is full-rank.%
\footnote{More specifically,~\cite{Vladut-l2kissnum-2019} takes $\C_a$ to be a code with exponential kissing number. However, \cite{Vladut-l2kissnum-2019} does not invoke this additional property of $\C_a$, or any other properties of $K_0,\vec{c}_1,\ldots, \vec{c}_{a-1}, K_a$.}

\cite{Vladut-l2kissnum-2019} asserts that when applying \cref{eq:vladut-consD} to a generator matrix $K_a$ of $\C_a$ and vectors $\vec{c}_i$ of this form,
\begin{quote}
``\ldots each minimum weight vector of [$\C_a$] produces a minimum norm vector in [the resulting Construction D lattice $\lat_D$].''
\end{quote}
However, this claim is made without proof and is false. We show how to construct counterexamples in what follows.\footnote{To be falsifiable, the word ``produces'' needs to be defined. In fact, our counterexample rules out even a rather broad interpretation of this, showing that there is no $\vec{y} \in \lat_D \cap \{-1,0,1\}^n$ with $\|\vec{y}\|_2^2 = d(\C_a)$.} The rough idea is to use a gadget matrix $B$ that has a non-trivial kernel over $\F_2$ but is such that $\bar{B}$ has a trivial kernel over the reals.
We use $\vec{0}_k$ and $\vec{1}_k$ to denote the all-zeroes and all-ones vectors of length $k$, respectively, use $I_k$ to denote the $k \times k$ identity matrix, and use $\vec{0}_{k \times \ell}$ to denote the $k \times \ell$ all-zeroes matrix.

\begin{theorem} \label{thm:consD-counterexample}
Let $n_1, n_2 \geq 1$, $a \geq 2$, $m > 4^a$, and $k \geq 1$ be integers. 
Let $A \in \F_2^{n_1 \times k}$, $B \in \F_2^{n_2 \times k}$, and $\vec{w} \in \F_2^k \setminus \set{\vec{0}_k}$ be such that the following all hold.
\begin{enumerate}
\item \label{item:Avec1} $\norm{A \cdot \vec{w}}_0 = 4^a$.
\item \label{item:kerB} $\ker(B) = \set{\vec{0}_{k}, \vec{w}}$.\footnote{We write $\ker(M) := \{\vec{x} \in \F_2^\ell \ : \ M\vec{x} = \vec0\}$ for the kernel of a matrix $M \in \F_2^{n \times \ell}$  \emph{over $\F_2$}.}
\item \label{item:ternaryy} 
For all $\vec{y} \in 2 \Z^k + \bar{\vec{w}}$, we have $(\bar{B} \cdot \vec{y}) \bmod 4 \neq \vec{0}$.
\end{enumerate}

Let
\[
K_a :=
\begin{pmatrix}
A \\
B \otimes \vec{1}_m
\end{pmatrix} \in \F_2^{n \times k} \ \text{,}
\]
where $n := n_1 + m n_2$, and let $\C_a = \C(K_a)$.\footnote{Notice that $K_a$ has full column rank by \cref{item:Avec1,item:kerB}.}
Furthermore, let $\vec{c}_1, \ldots, \vec{c}_{a-1} \in \F_2^{n}$ be vectors such that $\norm{\vec{c}_i}_0 = 4^i$, let $K_0$ be such that $(K_0, \vec{c}_1, \ldots, \vec{c}_{a-1}, K_a) \in \F_2^{n \times n}$ is full-rank, and let $\lat_D = \lat_D(K_0, \vec{c}_1, \ldots, \vec{c}_{a-1}, K_a)$ be as in \cref{eq:vladut-consD}.

Then $d(\C_a) = 4^a$ and $\lat_D \cap \set{-1, 0, 1}^n$ contains no non-zero vectors of norm at most $\sqrt{d(\C_a)} = 2^a$. In particular, for all minimum-weight non-zero codewords $\vec{c} \in \C_a$, we have that $\bar{\vec{c}} \notin \lat_D$.
\end{theorem}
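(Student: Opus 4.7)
The plan is to prove three things in order: $d(\C_a) = 4^a$; that no nonzero $\vec{y} \in \lat_D \cap \{-1,0,1\}^n$ satisfies $\|\vec{y}\|_2 \leq 2^a$; and, as an immediate consequence, that the embedding of the minimum-weight codeword fails to lie in $\lat_D$. For the first claim, I would compute $\|K_a \vec{x}\|_0$ for an arbitrary $\vec{x} \in \F_2^k$: because each row of $B$ is duplicated $m$ times in $B \otimes \vec{1}_m$, this equals $\|A\vec{x}\|_0 + m \|B\vec{x}\|_0$. \cref{item:kerB} forces a dichotomy---either $\vec{x} = \vec{w}$, with weight $\|A\vec{w}\|_0 = 4^a$ by \cref{item:Avec1}, or $\|B\vec{x}\|_0 \geq 1$, with weight at least $m > 4^a$---so $d(\C_a) = 4^a$ and the unique nonzero minimum-weight codeword is $K_a\vec{w}$.

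Next, take any $\vec{y} \in \lat_D \cap \{-1,0,1\}^n$ with $\|\vec{y}\|_2^2 \leq 4^a$ and aim to conclude $\vec{y} = \vec{0}$. Expand $\vec{y} = 2^a \bar{K}_0 \vec{a}_0 + \sum_{i=1}^{a-1} 2^{a-i} a_i \bar{\vec{c}}_i + \bar{K}_a \vec{a}_a$ with integer coefficients and set $\vec{x} := \vec{a}_a \bmod 2 \in \F_2^k$. The natural case split is on $\vec{x}$. If $\vec{x} = \vec{0}$, every summand lies in $2\Z^n$, so $\vec{y} \in 2\Z^n \cap \{-1,0,1\}^n = \{\vec{0}\}$. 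If $\vec{x} \notin \ker(B)$, then some coordinate $(B\vec{x})_i \neq 0$ makes the $i$-th bottom $m$-block of $\vec{y}$ entirely odd and hence entirely $\pm 1$, contributing $m > 4^a$ to $\|\vec{y}\|_2^2$ and contradicting the norm bound.

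The main obstacle is the remaining case $\vec{x} = \vec{w}$. Here $B\vec{x} = \vec{0}$ forces the bottom of $\vec{y}$ to be componentwise even, hence zero. Reducing the expansion modulo $4$ kills every term whose coefficient $2^{a-i}$ is divisible by $4$---that is, every summand except the one with $i = a-1$ and the $\bar{K}_a$ term, using $a \geq 2$---giving
\[
  (\bar{K}_a \vec{a}_a)_{\mathrm{bot}} + 2 a_{a-1} (\bar{\vec{c}}_{a-1})_{\mathrm{bot}} \equiv \vec{0} \pmod{4}.
\]
Since $\vec{a}_a \in 2\Z^k + \bar{\vec{w}}$, \cref{item:ternaryy} furnishes an index $i^*$ with $t := (\bar{B}\vec{a}_a)_{i^*} \not\equiv 0 \pmod{4}$, and the entire $i^*$-th $m$-block of $(\bar{K}_a \vec{a}_a)_{\mathrm{bot}}$ equals the constant $t$. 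The pigeonhole $\|\vec{c}_{a-1}\|_0 = 4^{a-1} < m$ then guarantees a zero coordinate in the $i^*$-th $m$-block of $\bar{\vec{c}}_{a-1}$; at that coordinate the congruence reads $t \equiv 0 \pmod{4}$, contradicting the choice of $i^*$. This is where the three hypotheses conspire: \cref{item:kerB} isolates this single hard subcase, \cref{item:ternaryy} supplies nontrivial mod-$4$ information inside $\bar{K}_a \vec{a}_a$, and the weight bound on $\vec{c}_{a-1}$ together with $m > 4^a$ prevents the $2\bar{\vec{c}}_{a-1}$ term from absorbing that information uniformly across an $m$-block. Finally, applying the second claim to $\bar{\vec{c}} = \bar{K_a \vec{w}} \in \{0,1\}^n$, which is nonzero with Euclidean norm exactly $2^a$, yields $\bar{\vec{c}} \notin \lat_D$ and completes the theorem.
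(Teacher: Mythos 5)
Your proof is correct and takes essentially the same approach as the paper's: the same weight computation giving $d(\C_a)=4^a$, the same case split on $\vec{a}_a \bmod 2$ (zero, outside $\ker(B)$, or congruent to $\vec{w}$), and in the hard case the same mod-$4$ argument combining hypothesis 3 with the fact that $\supp(\vec{c}_{a-1})$, of size $4^{a-1}<m$, cannot cover an $m$-block. The only minor difference is the finish of that case: the paper projects away from $\supp(\vec{c}_{a-1})$ to get the quantitative bound $\|\vec{y}\|_2 \geq 2\sqrt{m-4^{a-1}} > 2^a$, whereas you observe that the bottom coordinates of a ternary $\vec{y}$ must vanish and derive an outright contradiction; both finishes are valid.
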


\begin{proof}
Note that for $\vec{x} \in \F_2^{k}$, $\norm{K_a \vec{x}}_0 = \norm{A \vec{x}}_0 + m \cdot \norm{B \vec{x}}_0$. The claim that $d(\C_a) = 4^a$ then follows by noting that $\norm{K_a  \vec{w}}_0 = 4^a$ by \cref{item:Avec1,item:kerB}, and that for $\vec{x} \in \F_2^{k} \setminus \set{\vec{0}_{k}, \vec{w}}$, $ m \cdot \norm{B \vec{x}}_0 \geq m > 4^a$ by \cref{item:kerB}.

We next show that $\lat_D \cap \set{-1, 0, 1}^n$ contains no non-zero vectors of norm at most $2^a$. For $\vec{x} \in \Z^n$, define $\vec{x} \mapsto \vec{x} \bmod 2$ and $\vec{x} \mapsto \vec{x} \bmod 4$ to be the entry-wise maps to $\bit^n$ and $\set{-1, 0, 1, 2}^n$, respectively, and note that for any $\vec{v} \in \Z^n$, $\norm{\vec{v} \bmod 2}_2 \leq \norm{\vec{v} \bmod 4}_2 \leq \norm{\vec{v}}_2$.
Let $\pi$ denote projection onto $\lspan(\set{\vec{e}_i : i \notin \supp(\vec{c}_{a-1})})$, i.e., onto the complement of the support of $\vec{c}_{a-1}$.

Let $\vec{y} \in \lat_D \cap \set{-1, 0, 1}^n$ be a non-zero vector, and note that 
$\vec{y} \in 4 \Z^n + 2 b \bar{\vec{c}}_{a-1} + \bar{K}_a \vec{z}$ for some $b \in \bit$ and $\vec{z} \in \Z^k \setminus (2 \Z^k)$.
We consider two cases. First, consider the case where $\vec{z} \notin 2\Z^k + \bar{\vec{w}}$. Then $\norm{\vec{y}}_2 \geq \norm{\vec{y} \bmod 2}_2 \geq m > 4^a$ by \cref{item:kerB}.
For the second case, we have that $\vec{z} \in 2\Z^k + \bar{\vec{w}}$.
Then,
\begin{align*}
\norm{\vec{y}}_2 &\geq \norm{\vec{y} \bmod 4}_2 \\
                 &= \norm{(2 b \bar{\vec{c}}_{a-1} + \bar{K}_a \vec{z}) \bmod 4}_2 \\
                 &\geq \norm{\pi((2 b \bar{\vec{c}}_{a-1} + \bar{K}_a \vec{z}) \bmod 4)}_2 \\
                 &= \norm{\pi(\bar{K}_a \vec{z} \bmod 4)}_2 \\
                 &\geq \norm{\pi((B \otimes \vec{1}_m) \cdot \vec{z} \bmod 4)}_2 \\
                 &\geq 2 \sqrt{m - \card{\supp(\vec{c}_{a-1})}} \\
                 &> 2 \sqrt{4^a - 4^{a-1}} \\
                 &> 2^a \ \text{,}
\end{align*}
where the fourth inequality holds because \cref{item:ternaryy} implies that $B \vec{z} \bmod 4$ has a coordinate equal to~$2$.
The result follows.
\end{proof}

We now get the following corollary.

\begin{corollary}
There exist a positive integer $n$ and a generator matrix $K_2 \in \F_2^{n \times 3}$ with $d(\C(K)) = 16$ such that for any vector $\vec{c}_1 \in \F_2^n$ of Hamming weight $4$ and any matrix $K_0$ such that $(K_0, \vec{c}_1, K_2) \in \F_2^{n \times n}$ is full-rank, the Construction-D lattice $\lat_D = \lat_D(K_0, \vec{c}_1, K_2)$ (obtained by applying \cref{eq:vladut-consD} with $a = 2$) is such that $\lat_D \cap \set{-1, 0, 1}^n$ contains no vectors of norm $\sqrt{d(\C(K_2))} = 4$. In particular, for all minimum-weight non-zero codewords $\vec{c} \in \C(K_2)$, we have that $\bar{\vec{c}} \notin \lat_D$.
\end{corollary}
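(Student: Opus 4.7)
The plan is to simply instantiate \Cref{thm:consD-counterexample} with $a = 2$ and furnish explicit small matrices $A$, $B$ and a vector $\vec{w}$ satisfying hypotheses \ref{item:Avec1}--\ref{item:ternaryy}. Once this is done, the matrix $K_a = K_2$ produced by the theorem has $3$ columns and $d(\C(K_2)) = 4^2 = 16$, and the theorem yields exactly the conclusion of the corollary.

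For the parameters, I will take $k = 3$, $\vec{w} = (1,1,1)^T$, and $m = 17$ (any integer $m > 16$ works). For the bottom block I propose
\[
B := \begin{pmatrix} 1 & 1 & 0 \\ 0 & 1 & 1 \\ 1 & 0 & 1 \end{pmatrix} \in \F_2^{3 \times 3},
\]
whose columns sum to $\vec{0}$ while no two of them sum to $\vec{0}$, so $\ker(B) = \{\vec{0}, \vec{w}\}$, verifying~\ref{item:kerB}. For the top block I take $A \in \F_2^{16 \times 3}$ whose first column is $\vec{1}_{16}$ and whose other two columns are $\vec{0}_{16}$, so that $\|A \vec{w}\|_0 = 16 = 4^a$, verifying~\ref{item:Avec1}. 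Setting $n_1 = 16$ and $n_2 = 3$, this gives $n = n_1 + m n_2 = 16 + 51 = 67$.

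The step that needs a real (though short) argument is condition \ref{item:ternaryy}: for every $\vec{y} \in 2\Z^3 + \bar{\vec{w}}$ (i.e., every $\vec{y}$ with all three entries odd), one must check $(\bar{B}\vec{y}) \bmod 4 \neq \vec{0}$. The three coordinates of $\bar{B}\vec{y}$ are $y_1+y_2$, $y_2+y_3$, $y_1+y_3$, each a sum of two odd integers and hence even. If all three were $\equiv 0 \pmod 4$, then adding the first two would give $y_1 + 2y_2 + y_3 \equiv 0 \pmod 4$; but $2 y_2 \equiv 2 \pmod 4$ since $y_2$ is odd, so $y_1 + y_3 \equiv 2 \pmod 4$, contradicting the third congruence. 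This contradiction establishes~\ref{item:ternaryy}.

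Finally, I will note that the full-column-rank requirement on $K_2 = \binom{A}{B \otimes \vec{1}_m}$ is automatic: the only nonzero element of $\ker(B \otimes \vec{1}_m) = \ker(B)$ is $\vec{w}$, and $A\vec{w} \neq \vec{0}$ by~\ref{item:Avec1}, so no nontrivial $\F_2$-combination of the columns of $K_2$ vanishes. Applying \Cref{thm:consD-counterexample} with $a = 2$ then yields the corollary for this explicit $n = 67$ and $K_2$, for every admissible $\vec{c}_1$ and $K_0$. The only mildly nontrivial ingredient is the parity/mod-$4$ check for $B$ above; the rest is bookkeeping.
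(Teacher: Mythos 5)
Your proposal is correct and follows essentially the same route as the paper: instantiate \cref{thm:consD-counterexample} with $a=2$, $k=3$, $\vec{w}=\vec{1}_3$, the same circulant $B$ (up to a row permutation), and a $16\times 3$ matrix $A$ with $\norm{A\vec{w}}_0=16$ (the paper uses $\vec{1}_{14}$ plus an $I_2$ block where you use $\vec{1}_{16}$ and two zero columns, which works equally well since the theorem only uses $A$ through \cref{item:Avec1}). Your verification of \cref{item:ternaryy} by direct mod-$4$ arithmetic is a valid alternative to the paper's observation that $\vec{1}_3\notin\C(B)$, and your full-rank and parameter checks ($m=17$, $n=67$) are fine.
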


\begin{proof}
One can check that the following $A$, $B$, and $\vec{w}$ satisfy the conditions for \cref{thm:consD-counterexample}:
\[
A := \begin{pmatrix}
\vec{1}_{14} & \vec{0}_{14 \times 2} \\
\vec{0}_2 & I_2 \\
\end{pmatrix} 
\ \text{,} \qquad
B := \begin{pmatrix}
1 & 1 & 0 \\
1 & 0 & 1 \\
0 & 1 & 1
\end{pmatrix}
\ \text{,} \qquad
\vec{w} := \vec{1}_3 
\ \text{.}
\]
In particular, one can check that \cref{item:ternaryy} holds as follows. Let $\vec{y} = 2 \vec{z} + \bar{\vec{w}}$ for some $\vec{z} \in \Z^3$, and note that $\vec{1}_3 \notin \C(B)$. Therefore, $\bar{B} \vec{z} \bmod 2$ must have a coordinate equal to $0$, and so $\bar{B} \vec{y} \bmod 4 = (2 \bar{B} \vec{z} + 2 \cdot \vec{1}_3) \bmod 4$ must have a coordinate equal to $2$.
The result follows.
\end{proof}

\begin{remark}
We note that~\cite{Vladut-l2kissnum-2019} also suggests using Construction E, which is a generalization of Construction D. However, the same issue arises there.
\end{remark}

\subsection{Taking All Embeddings of Minimum-Weight Codewords as a Generating Set}
\label{sec:all-min-codewords}

We now consider a different construction, which~\cite{Vladut-lpkissnum-2021} calls ``simplified Construction D.'' However, this construction does not produce a lattice with the same properties as Construction D.

The idea behind this construction is quite simple. Given a code $\C$, we define 
\[
S_{\C} := \{\vec{c} \in \C \ : \ \|\vec{c}\|_0 = d(\C)\}
\]
to be the set of codewords in $\C$ with Hamming weight $d(\C)$ (i.e., the set of shortest non-zero codewords). The lattice constructed in \cite{Vladut-lpkissnum-2021} is then simply 
\[
\lat = \lat(\bar{S}_{\C}) := \{z_1 \bar{\vec{c}}_1 + \cdots + z_{|S_\C|} \bar{\vec{c}}_{|S_\C|} \ : \ z_i \in \Z, \vec{c}_i \in S_{\C}\}
\; ,
\]
i.e., the lattice generated by the embeddings into $\Z^n$ of all shortest non-zero codewords in $\C$.

Notice that since $\bar{S}_{\C} \subset \lat$, $\lat$ certainly has at least $\kappa_0(\C)$ vectors with length $\sqrt{d(\C)}$. So, if we could somehow prove that $\lambda_1(\lat)^2 = d(\C)$, then we would have $\kappa_2(\lat) \geq \kappa_0(\C)$ and could thus convert a code with large kissing number into a lattice with large kissing number. However, it is unfortunately not the case that $\lambda_1(\lat)^2 = d(\C)$ in general, as we show in \cref{thm:all-shortest-counterexample} below.
Note that this is essentially the opposite issue as the one with Construction-D lattices $\lat_D$, which are constructed so that $\lambda_1(\lat_D)^2 = d(\C_a)$, but, as we showed in the previous section, need not contain the representatives $\bar{\vec{c}}$ of minimum-weight non-zero codewords $\vec{c} \in \C$.

We now give sufficient conditions for $\lat = \lat(\bar{S}_{\C})$ \emph{not} to have $\lambda_1(\lat)^2 = d(\C)$.

\begin{theorem} \label{thm:all-shortest-counterexample}
Let $A = (\vec{a}_1,\ldots,\vec{a}_\ell) \in \F_2^{n_1 \times \ell}$, $B = (\vec{b}_1,\ldots, \vec{b}_\ell) \in \F_2^{n_2 \times \ell}$, and $\vec{z} \in \Z^{\ell}$ satisfy the following properties. (Notice that these properties imply that $A$ and $B$ are \emph{not} full-rank matrices.) 
\begin{enumerate}
    \item \label{item:short_generators} 
    $d(\C(A)) = \|\vec{a}_i\|_0$ and $d(\C(B)) = \|\vec{b}_i\|_0$ for all $i \in \set{1, \ldots, \ell}$.
    \item \label{item:kernel_subset} $\ker(B) \subseteq \ker(A)$.
    \item \label{item:long_outside_kernel} For all $\vec{x} \in \ker(A) \setminus \ker(B)$, $\|B\vec{x}\|_0 > d(\C(B))$.
    \item \label{item:funny_kernel} $\bar{B} \vec{z} = \vec0$ but $\bar{A} \vec{z} \neq \vec0$.\footnote{Of course, \cref{item:kernel_subset} implies that $\bar{A} \vec{z}$ is zero \emph{modulo $2$}.}
\end{enumerate}
Then, for any sufficiently large positive integer $m$, the code
\[
    \C_m := \C\begin{pmatrix}
        A\\
        B \otimes \vec{1}_m
    \end{pmatrix}
\]
satisfies
\[
    \lambda_1(\lat(\bar{S}_{\C_m}))^2 < d(\C_m)
    \; .
\]
\end{theorem}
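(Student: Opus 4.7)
The plan is to first pin down $d(\C_m)$ exactly for $m$ large, then exhibit a single vector in $\lat(\bar{S}_{\C_m})$ of squared Euclidean norm strictly smaller than $d(\C_m)$, using condition 4 as a gadget that cancels the $B$-block over $\Z$ while leaving a nonzero $A$-block.

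First I would compute $d(\C_m)$. Every codeword has the form $K_m \vec{x} = \begin{pmatrix} A\vec{x} \\ (B\vec{x}) \otimes \vec{1}_m \end{pmatrix}$, of Hamming weight $\|A\vec{x}\|_0 + m\|B\vec{x}\|_0$, and condition 2 makes $\ker(K_m) = \ker(A) \cap \ker(B) = \ker(B)$, so nonzero codewords correspond to $\vec{x} \notin \ker(B)$. Splitting into cases on whether $\vec{x} \in \ker(A)$ and applying condition 3 in the first case, one obtains weight $\geq m(d(\C(B))+1)$ for $\vec{x} \in \ker(A) \setminus \ker(B)$ and weight $\geq d(\C(A)) + m \cdot d(\C(B))$ for $\vec{x} \notin \ker(A)$ (using condition 2 to rule out $\|B\vec{x}\|_0 = 0$). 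For $m \geq d(\C(A))$ the second bound is smaller, and by condition 1 it is attained at $\vec{x} = \vec{e}_i$; hence $d(\C_m) = d(\C(A)) + m \cdot d(\C(B))$, and each $\vec{e}_i$ lies in $T := \{\vec{x} : \|A\vec{x}\|_0 = d(\C(A)),\ \|B\vec{x}\|_0 = d(\C(B))\}$, the index set for $S_{\C_m}$.

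The candidate short vector in $\lat(\bar{S}_{\C_m})$ is then the integer combination
\[
\vec{v} \; := \; \sum_{i=1}^{\ell} z_i \, \overline{K_m \vec{e}_i} \; = \; \begin{pmatrix} \bar{A}\vec{z} \\ (\bar{B}\vec{z}) \otimes \vec{1}_m \end{pmatrix} \; = \; \begin{pmatrix} \bar{A}\vec{z} \\ \vec{0}_{m n_2} \end{pmatrix},
\]
where the last equality uses $\bar{B}\vec{z} = \vec{0}$ from condition 4. The same condition gives $\bar{A}\vec{z} \neq \vec{0}$, so $\vec{v}$ is nonzero, while $\|\vec{v}\|_2^2 = \|\bar{A}\vec{z}\|_2^2$ is a constant depending only on $A$ and $\vec{z}$. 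Because $d(\C_m)$ grows linearly in $m$, the inequality $\lambda_1(\lat(\bar{S}_{\C_m}))^2 \leq \|\vec{v}\|_2^2 < d(\C_m)$ follows for all sufficiently large $m$.

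The only real obstacle is the two-case argument that nails down $d(\C_m)$ for large $m$; both directions of condition 2 and the strict inequality in condition 3 are needed there. Everything else is exactly the phenomenon emphasized in the introduction: condition 4 lets an integer combination of minimum-weight embeddings collapse to zero on the $B$-block over $\Z$ without doing so on the $A$-block, producing a lattice vector strictly shorter than the embedding of any single minimum-weight codeword.
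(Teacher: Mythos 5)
Your proposal is correct and follows essentially the same route as the paper's proof: the same two-case analysis (using conditions 2 and 3 and $m \geq d(\C(A))$) to show $d(\C_m) = d(\C(A)) + m \cdot d(\C(B))$ with the columns of the generator matrix as minimum-weight codewords, and then the same short vector $\bar{G}_m \vec{z}$, whose $B$-block vanishes over $\Z$ by condition 4 while $d(\C_m)$ grows linearly in $m$.
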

\begin{proof}
    We take $m$ to be large enough that $m \geq d(\C(A))$ \emph{and}
    \[
        d(\C(A)) + m \cdot d(\C(B)) > \|\bar{A}\vec{z}\|_2^2
        \; .
    \]
    Let 
    \[
        G_m := \begin{pmatrix}
        A\\
        B \otimes \vec{1}_m
    \end{pmatrix} \in \F_2^{(n_1 + m n_2) \times \ell}
    \; .
    \]
    
    We first argue that $d(\C_m) = d(\C(A)) + m \cdot d(\C(B))$. It follows from \cref{item:short_generators} that $d(\C_m) \leq d(\C(A)) + m \cdot d(\C(B))$. So, suppose that $\vec{x} \in \F_2^\ell$ is such that $G_m\vec{x} \neq \vec0$. If $A \vec{x} \neq \vec0$, then by \cref{item:kernel_subset}, it follows that $B\vec{x} \neq \vec0$ and therefore that $\|G_m \vec{x}\|_0 \geq d(\C(A)) + m \cdot d(\C(B))$. If $A\vec{x} =\vec0$, then \cref{item:long_outside_kernel} implies that 
    \[\|G_m\vec{x}\|_0 = m \|B\vec{x}\|_0 \geq m \cdot (d(\C(B)) + 1) \geq  d(\C(A)) + m \cdot d(\C(B))
    \; ,
    \]
    where in the second inequality we have used the fact that $m \geq d(\C(A))$. It follows that $d(\C_m) = d(\C(A)) + m \cdot d(\C(B))$ as claimed.

    Together with \cref{item:short_generators}, this implies that the columns of $G_m$ are shortest non-zero codewords in $\C_m$, i.e., 
    \[
        \begin{pmatrix}
            \vec{a}_i\\
            \vec{b}_i \otimes \vec{1}_m
        \end{pmatrix} \in S_{\C_m}
    \]
    for all $i$. Therefore, 
    \[
         \lat(\bar{G}_m) \subseteq \lat(\bar{S}_{\C_m})
        \; .
    \]

    So, it suffices to find a non-zero vector in $\lat(\bar{G}_m)$ with squared norm strictly less then $d(\C(A)) + m \cdot d(\C(B))$. Let $\vec{z} \in \Z^\ell$ be as in \cref{item:funny_kernel}. Since $\bar{A}\vec{z} \neq \vec0$ by assumption, we have $\bar{G}_m \vec{z} \neq \vec0$. On the other hand, since $\bar{B} \vec{z} = \vec0$, we have that
    \[
        \|\bar{G}_m \vec{z}\|_2 = \|\bar{A}\vec{z}\|_2
        \; .
    \]
    Since we chose $m$ such that 
    $
        d(\C(A)) + m \cdot d(\C(B)) > \|\bar{A}\vec{z}\|_2^2
    $,
    the result follows.
\end{proof}

By instantiating $A$, $B$, and $\vec{z}$ appropriately, we immediately obtain the following corollary. (More concretely, by choosing $m = 4$, we obtain a counterexample code with block length $18$, dimension $3$, and minimum distance $9$.)

\begin{corollary} \label{cor:all-shortest-counterexample}
There exists a (binary, linear) code $\C$ such that $\lambda_1(\lat(\bar{S}_{\C}))^2 < d(\C)$.
\end{corollary}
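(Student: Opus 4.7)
The plan is to instantiate \cref{thm:all-shortest-counterexample} with a small, explicit choice of $A$, $B$, and $\vec{z}$ and then apply it with $m = 4$; the corollary then follows at once. Concretely, I will take $\ell = 4$ and
\[
A := \begin{pmatrix} 1 & 1 & 0 & 0 \\ 0 & 0 & 1 & 1 \end{pmatrix} \in \F_2^{2 \times 4}, \quad
B := \begin{pmatrix} 1 & 0 & 1 & 0 \\ 1 & 0 & 0 & 1 \\ 0 & 1 & 1 & 0 \\ 0 & 1 & 0 & 1 \end{pmatrix} \in \F_2^{4 \times 4}, \quad
\vec{z} := (1, 1, -1, -1)^T.
\]
The four columns of $B$ are the weight-two vectors $(1,1,0,0)^T$, $(0,0,1,1)^T$, $(1,0,1,0)^T$, and $(0,1,0,1)^T$, chosen so that $\bar{\vec{b}}_1 + \bar{\vec{b}}_2 = \bar{\vec{b}}_3 + \bar{\vec{b}}_4 = (1,1,1,1)^T$ as \emph{integer} vectors; this identity is exactly what drives \cref{item:funny_kernel}.

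Verification of the four hypotheses of \cref{thm:all-shortest-counterexample} is then routine: every column of $A$ has weight $1$ and every column of $B$ has weight $2$, matching $d(\C(A)) = 1$ and $d(\C(B)) = 2$ (so \cref{item:short_generators} holds); the only $\F_2$-dependence among the columns of $B$ is $\vec{b}_1 + \vec{b}_2 + \vec{b}_3 + \vec{b}_4 = \vec{0}$, giving $\ker(B) = \{\vec{0}, (1,1,1,1)\}$, which sits inside $\ker(A) = \{\vec{0}, (1,1,0,0), (0,0,1,1), (1,1,1,1)\}$ (\cref{item:kernel_subset}); the two elements of $\ker(A) \setminus \ker(B)$ both map under $B$ to $(1,1,1,1)^T$ of Hamming weight $4 > 2$ (\cref{item:long_outside_kernel}); and direct computation gives $\bar{B}\vec{z} = \vec{0}$ while $\bar{A}\vec{z} = (2, -2)^T \neq \vec{0}$ (\cref{item:funny_kernel}). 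Taking $m = 4$ satisfies both conditions the proof of the theorem requires ($m \geq d(\C(A)) = 1$ and $d(\C(A)) + m \cdot d(\C(B)) = 9 > 8 = \|\bar{A}\vec{z}\|_2^2$), and the resulting code $\C_m$ has block length $18$, dimension $3$, and minimum distance $9$.

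The main conceptual obstacle---and the reason some thought is needed to build even such a small example---is a tension between \cref{item:kernel_subset} and \cref{item:funny_kernel}. The former forces $\vec{z} \bmod 2 \in \ker(B) \subseteq \ker(A)$, so every entry of $\bar{A}\vec{z}$ is already constrained to be even, and the most naive attempts (e.g., taking $B$ with two equal columns so that $\vec{z} = (1,-1,0,\ldots)$ witnesses the $\F_2$-relation) force $\bar{A}\vec{z} = \vec{0}$ outright. My construction sidesteps this by having $B$'s integer relation arise from a \emph{disjoint-support} coincidence ($\bar{\vec{b}}_1 + \bar{\vec{b}}_2 = \bar{\vec{b}}_3 + \bar{\vec{b}}_4$) rather than from pairs of identical columns, while $A$'s columns come in two identical pairs $\{\vec{a}_1 = \vec{a}_2\}$ and $\{\vec{a}_3 = \vec{a}_4\}$ whose pairing does not match $B$'s integer relation; this lets $\vec{z} = (1,1,-1,-1)$ kill $\bar{B}$ while leaving the nonzero even vector $2(\bar{\vec{a}}_1 - \bar{\vec{a}}_3)$ under $\bar{A}$, as required.
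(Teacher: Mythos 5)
Your proposal is correct and takes essentially the same route as the paper: it instantiates \cref{thm:all-shortest-counterexample} with the same $A$ and $\vec{z}$ and a row permutation of the same $B$, with the same choice $m=4$ yielding the same block-length-$18$, dimension-$3$, distance-$9$ code. The only difference is that you spell out the verification of the four hypotheses, which the paper leaves as a ``one can check.''
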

\begin{proof}
    Let
\[
 A := \begin{pmatrix}
            1 &1 &0 &0\\
            0&0&1&1
 \end{pmatrix} \text{ ,} \qquad
 B := \begin{pmatrix}
     1 &0 &1 &0\\
     0 &1 &0 &1\\
     0& 1 &1 &0\\
     1 &0 &0 &1
 \end{pmatrix} \text{ ,} \qquad
 \vec{z} := \begin{pmatrix}
 1\\1\\-1\\-1 \end{pmatrix} \ \text{.}
\]
One can then check that $A$, $B$, and $\vec{z}$ satisfy the conditions for \cref{thm:all-shortest-counterexample}, and the result follows.
\end{proof}

\begin{remark}
    We note that~\cite{Vladut-lpkissnum-2021} uses the construction described above to attempt to construct lattices with exponential kissing number in many different norms, and not just the $\ell_2$ norm. \cref{thm:all-shortest-counterexample} extends more-or-less immediately to $\ell_p$ norms for finite $p$. But, we also note that the binary Golay code yields a counterexample for all $p < 2$. The binary Golay code is perhaps a more natural counterexample because it does not have repeated coordinates.
\end{remark}

\subsection{\texorpdfstring{Constructions $C$, $\bar{D}$, and More}{C, the Code Formula, and More}}

Finally, we consider constructions of lattices based on towers of codes that are closed under Schur product.
The \emph{Schur product} or coordinate-wise product of two vectors $\vec{x}, \vec{y} \in \F_2^n$ is defined as $\vec{x} \odot \vec{y} = (x_1 y_1, \ldots, x_n y_n)^T \in \F_2^n$.
A tower of binary codes $\F_2^n = \C_0 \supseteq \C_1 \supseteq \cdots \supseteq \C_a$ is \emph{closed under the Schur product} if for every $i = 1, \ldots, a$ and pair of codewords $\vec{c}, \vec{c}' \in \C_i$, $\vec{c} \odot \vec{c}' \in \C_{i - 1}$. 

We note that this property is quite natural in the context of Construction-D-like constructions of lattices from a tower of codes. 
Indeed, for any $\vec{c}, \vec{c}' \in \F_2^n$, $\bar{\vec{c} + \vec{c}'} = \bar{\vec{c}} + \bar{\vec{c}'} - 2 \cdot (\bar{\vec{c} \odot \vec{c}'})$.
So, a tower of binary codes being closed under the Schur product implies that if $\vec{c}, \vec{c}' \in \C_i$ then $\bar{\vec{c} + \vec{c}'} = \bar{\vec{c}} + \bar{\vec{c}'} - 2 \cdot (\bar{\vec{c} \odot \vec{c}'}) \in \bar{C}_i - 2 \bar{C}_{i - 1} \subseteq \Z^n$.

\paragraph{Constructions in~\cite{vladut24}.}
Let $\C \subseteq \F_2^n$ be a code.~\cite{vladut24} defines the Construction $C^*$ lattice obtained from $\C$ as
\[
\lat_{C^*}(\C) := 2^n \cdot \Z^n + \bigcap_{i = 1}^n (2^{n - i} \cdot \bar{\C} + 2^{n - i + 1} \cdot \Z^n) \ \text{.}
\]
The definition of Construction $C^*$ is apparently new to~\cite{vladut24}.
We note that $\lat_{C^*}(\C)$ is in fact a lattice, but that it is simply a scaling of the Construction A lattice $\lat_A(\C)$. 
Indeed, notice that
\[
    \lat_{C^*}(\C) = 2^n \cdot \Z^n + \bigcap_{i=1}^n 2^{n-i} (\bar{\C} + 2\Z^n) = 2^n \cdot \Z^n + 2^{n-1} (\bar{\C} + 2\Z^n) = 2^{n-1} \cdot \lat_{A}(\C)
    \; ,
\]
since $2^{n-i} (\bar{\C} + 2\Z^n) \subseteq 2^{n-(i+1)} (\bar{\C} + 2\Z^n)$ for all $i$.

As noted in \cref{sec:cons-A}, Construction A lattices $\lat := \lat_A(\C) \subseteq \Z^n$ are such that $\lambda_1(\lat) \leq 2$, and so $\lat_A(\C)$ cannot have exponential kissing number (regardless of $\C)$. This property is of course preserved under scaling.
Because of this issue~\cite{vladut24} is invalid.

However,~\cite{vladut24} cites the sphere packing construction of Bos~\cite{bos80} (a variant of Construction C) and in personal communication~\cite{vladut-com-24} said that using the construction in~\cite{bos80} was his intention.
In particular,~\cite{vladut24} references~\cite[Theorem 1]{bos80}. However,~\cite[Theorem 1]{bos80} notes that for its sphere packing construction to yield a lattice, the tower of codes to which it is applied must be closed under the Schur product. 
It is not known whether suitable codes exist to produce a lattice with exponentially large kissing number from this construction.

Moreover,~\cite{vladut24} apparently uses a tower of codes $\C_1 = \cdots = \C_n = \C$ all of which are equal to a fixed code $\C \subseteq \F_2^n$. For such a tower of codes to be closed under the Schur product, it must therefore be the case that $\C$ itself is closed under the Schur product, i.e., that for every pair of codewords $\vec{c}, \vec{c}' \in \C$, $\vec{c} \odot \vec{c}' \in \C$. This in turn implies that any pair of distinct codewords $\vec{c}, \vec{c}' \in \C$ with $\norm{\vec{c}}_0 = \norm{\vec{c}'}_0 = d(\C)$ must have disjoint supports (since otherwise $\vec{c} \odot \vec{c}'$ would be a non-zero codeword in $\C$ with $\norm{\vec{c} \odot \vec{c}'} < d(\C)$). It is impossible to have more than $n$ non-zero codewords with pairwise disjoint supports, and so $\C$ cannot have exponential kissing number.

\paragraph{Construction $\bar{D}$.}
We conclude by discussing Construction $\bar{D}$, which is also known as the Code Formula or Forney's Code Formula~\cite{journals/tit/Forney88a,journals/tit/Forney88b,forney00,KositwattanarerkOggier14}. Let $\F_2^n = \C_0 \supseteq \C_1 \supseteq \cdots \supseteq \C_a$ be a tower of binary codes, and define
\begin{equation} \label{eq:L-d-bar}
\lat_{\bar{D}}(\C_1, \ldots, \C_a) := 2^a \cdot \Z^n + 2^{a - 1} \cdot \bar{\C}_1 + \cdots + 2 \cdot \bar{\C}_{a - 1} + \bar{\C}_a \ \text{.}
\end{equation}
It is known that $\lat_{\bar{D}}$ is a lattice if and only if the tower of codes $\F_2^n = \C_0 \supseteq \C_1 \supseteq \cdots \supseteq \C_a$ is closed under the Schur product, and that in this case it also coincides with Construction D; see~\cite[Theorem 1]{KositwattanarerkOggier14}.
Construction $\bar{D}$ is perhaps the cleanest of several related constructions of lattices from towers of codes that are closed under the Schur product.
In particular, although we have noted that the proof in \cite{vladut24} does not succeed, we do not rule out a similar approach that applies Construction $\bar{D}$ (or a related construction) to a tower of codes that is closed under the Schur product.

\end{document}